\title{An extension of Stein's method incorporating independence}
\author{
  Aleksandar~Bala\v{s}ev-Samarski\thanks{Department of Mathematics and Computer Science, Faculty of Science, University of Sarajevo, Bosnia and Herzegovina. Email: aleksandar.bs@pmf.unsa.ba}
  \and
  Abdol-Reza~Mansouri\thanks{Department of Mathematics \& Statistics, Queen's University, Canada. Email: mansouri@queensu.ca}
}
\date{30.04.2026.} 
\newcommand{\E}[1]{\mathbb{E} \left[ #1 \right]}
\newcommand{\Esmall}[1]{\mathbb{E}[ #1 ]}
\newcommand{\N}{\mathbb{N}}
\newcommand{\R}{\mathbb{R}}
\newcommand{\norm}[1]{\left\lVert#1\right\rVert}
\newcommand{\condexp}[2]{\mathbb{E}\left[ #1 \mid #2 \right]}
\newcommand{\condexpsmall}[2]{\mathbb{E} [ #1 \mid #2 ]}
\newcommand{\ouriff}{\iff}
\newcommand{\ind}{\mathds{1}}
\newcommand{\ourimplies}{\implies}
\newtheorem{theorem}{Theorem}[section]   % numbered within sections
\newtheorem{proposition}[theorem]{Proposition}
\newtheorem{corollary}[theorem]{Corollary}
\newtheorem{assumption}[theorem]{Assumption}
\theoremstyle{remark}
\newtheorem{remark}[theorem]{Remark}
\newcommand{\keywords}[1]{\par\noindent
  \textbf{Keywords: } #1}
\newcommand{\msc}[1]{\par\noindent
  \textbf{MSC 2020: } #1}
\begin{document}

\maketitle

\begin{abstract}
We extend Stein's method to include independence with respect to an auxiliary random variable, for any law for which a Stein characterization does exist. This extends the current literature on the problem.
Using tools from the Malliavin calculus, an application to the law of the invariant measure of an ergodic diffusion is given to illustrate the theory.
\end{abstract}

\keywords{Stein's method; Stein operators; Malliavin-Stein method; Malliavin calculus}

\msc{60B10, 60H07}

\section{Introduction}

Stein's method consists in characterizing a target distribution via a suitable operator
acting on a suitable class of test functions, and in using this characterization in order to derive quantitative bounds on the distance between the law of a given random variable and the target law \cite{S1972, S1986, NP2012}.
\par
More precisely, let $\mu$ be a probability measure on $\R$, which we consider as the target distribution on $\R$.
Stein's method consists in characterizing the target distribution $\mu$ by constructing a suitable operator $\mathcal{N}$ (the ``Stein operator'' for the target distribution $\mu$) acting on a suitable class of functions $\mathcal{C}$ such that 
$ \Esmall{\mathcal{N} \! f(X)} = 0, \ \forall f \in \mathcal{C}, $
if and only if the law of $X$ is equal to $\mu$. 
\par
The characterization provided by Stein's operator $\mathcal{N}$ can be used to provide bounds on the distance between the law of a given random variable $X$ and the target law $\mu$, via Stein's equation
$ \mathcal{N} \! f(x) = h(x) - \Esmall{h(M)},$
where $h$ is assumed to belong to a rich enough class
of Borel measurable functions, and $M$ is a random variable on $(\Omega, \mathcal{F}, \mathbb{P})$ having
law $\mu$. The basic intuition here is that if the law of the random variable $X$ is close to that of $M$, i.e., to $\mu$, then the expectation $\Esmall{\mathcal{N}\!f(X)}$ should be close to $0$, for any solution $f$ of the Stein equation. This general idea can be applied to Malliavin-differentiable random variables as well, with a characterization of normality in terms of Malliavin derivatives, leading to the extension known as the Malliavin-Stein method~\cite{NP2012}.
\par
In very recent work \cite{P2022, T2025} Stein's method has been extended to yield a characterization of normal random variables jointly with independence on an auxiliary random variable $Y$. 
More precisely, if $M$ is normally distributed and $Y$ is an auxiliary random variable, then, for a given random variable $X$, a Stein-type characterization is provided for the joint distribution
$\mathbb{P_{X,Y}}$ to equal the product distribution $\mathbb{P}_M \otimes \mathbb{P}_Y$. This characterization was first introduced for normally distributed random variables in~\cite{P2022}, then extended to normally distributed random vectors, with further extensions to joint asymptotic normality and independence, as well as Malliavin-differentiable random variables~\cite{T2025}, and finally to Gamma-distributed random variables in~\cite{T2024}. 
\par
It is important to note that the characterizations derived in~\cite{P2022,T2025, T2024}, as they are based on the use of characteristic functions, can a priori only cover the cases of normal and gamma distributions, owing to the very specific properties of these two distributions. Indeed, the characteristic functions for both normal and Gamma distributions satisfy a linear ordinary differential equation in which the associated differential operator is closely related to the corresponding Stein operator. This is not the case for arbitrary laws, leaving wide open the problem of characterizing independence for laws other than the normal and gamma laws, such as invariant laws of arbitrary ergodic diffusions. 
\par
In this work, we solve this problem by extending Stein's method - and Malliavin-Stein's as well - to include independence with respect to an auxiliary random variable, \emph{for any law} for which a Stein characterization does exist. We do so by circumventing the use of characteristic functions altogether. To the best of our knowledge, this is the first such extension of Stein's method.
\par
The paper is organized as follows: In Section~\ref{sec:Stein}, we recall the key ideas of Stein's method and provide the main results on which our extension is based, as well as our proposed extension. Finally, in Section~\ref{sec:Applications}, we demonstrate this extension for the case where the desired probability measure is the invariant measure of a particular ergodic diffusion. 

\section{Characterization using Stein operators and bounds on distances between laws}
\label{sec:Stein}

Consider the probability space $(\Omega, \mathcal{F}, \mathbb{P})$, and let $\mu$ be a probability measure on $\R$, which we consider as the target distribution on $\R$.
Stein's method consists in characterizing the target distribution $\mu$ by constructing a suitable operator $\mathcal{N}$ (the "Stein operator" for the target distribution $\mu$) 
satisfying the following assumption:
\begin{assumption}\label{as:1}
    \par
    There is a suitable class of functions $\mathcal{C}$ (depending on $\mathcal{N}$) such that:
    \begin{enumerate}[label=\roman*)]
        \item\label{as:i1} For any $\R$-valued random variable $X$ on $(\Omega, \mathcal{F}, \mathbb{P})$  we have the following:
        $ X \sim \mu \ouriff \Esmall{\mathcal{N} \! f(X)} = 0, \ \forall f \in \mathcal{C}; $
        \item For every bounded and measurable $h \colon \R \to \R$, the equation $\mathcal{N} \! f = h$ has a solution in $\mathcal{C}$.
    \end{enumerate}
\end{assumption}

We refer the interested reader to \cite{NP2012} for a detailed treatment of the classical Stein method.

As stated earlier, we wish to derive a characterization for a random variable $X$ to have law $\mu$ \emph{and} to be independent of an auxiliary random variable $Y$. For notational simplicity, we shall assume $Y$ is $\R-$valued; however, our results can be trivially extended to the case where $Y$ is $\R^d-$valued with $d \in \mathbb{N}$ arbitrary. The key result on which our characterization is based is given by the following proposition:

\begin{proposition}\label{pr:1}
    Let $X$ be a random variable on $(\Omega, \mathcal{F}, \mathbb{P})$, and let $\mathcal{G}$ be a sub-sigma-algebra of $\mathcal{F}$.
    We have
    \[ X \sim \mu \ \textrm{and} \ X, \mathcal{G} \ \textrm{independent} 
    \ouriff 
    \condexpsmall{\mathcal{N} \! f(X)}{\mathcal{G}} = 0 \ a.s., \ \forall f \in \mathcal{C}; \]
\end{proposition}

\begin{proof}[Proof of Proposition~\ref{pr:1}]
    Suppose first that $X \sim \mu$ and let $X$ be independent of $\mathcal{G}$.
    As $X \sim \mu$, $\Esmall{\mathcal{N} \! f(X)} = 0$ for all $f \in \mathcal{C}$.
    Furthermore, for every $B \in \mathcal{G}$ we have
    \begin{equation}\label{eq:th1}
        \Esmall{\mathcal{N} \! f(X) \ind_B} = \Esmall{\mathcal{N} \! f(X)} \Esmall{\ind_B} = 0,
    \end{equation}
    by independence of $\mathcal{G}$ and $X$.
    Since \eqref{eq:th1} holds for all $B \in \mathcal{G}$, it follows that $\condexpsmall{\mathcal{N} \! f(X)}{\mathcal{G}} = 0$ a.s.
    \par
    We now prove the converse.
    Suppose then that $\condexp{\mathcal{N} \! f(X)}{\mathcal{G}} = 0$ a.s. for all $f \in \mathcal{C}$.
    Upon taking total expectations, we obtain 
    $ \Esmall{\mathcal{N} \! f(X)} = 
       \Esmall{\condexpsmall{\mathcal{N} \! f(X)}{\mathcal{G}}} = 0, \ \forall f \in \mathcal{C},
    $
    which, by our assumptions on the operator $\mathcal{N}$ implies that $X \sim \mu$.
    We now prove independence of $X$ and $\mathcal{G}$.
    Let now $h : \R \to \R$ be Borel-measurable and bounded.
    Let $f_h \in \mathcal{C}$ be a solution to the equation 
    $ \mathcal{N} \! f = h - \E{h(X)};
    $
    note that the right-hand side of this equation is Borel-measurable and bounded, and hence such an $f_h$ does exist by assumption.
    Let now  $B \in \mathcal{G}$ be arbitrary.
    As $X \sim \mu$, we have 
    % TODO - two lines
    $
    \Esmall{ (h(X) - \E{h(X)})\ind_B } = \Esmall{\mathcal{N} \! f_h(X) \ind_B} 
    = \Esmall{ \condexpsmall{\mathcal{N} \! f_h(X) \ind_B}{\mathcal{G}}} 
    = \Esmall{ \ind_B \condexpsmall{\mathcal{N} \! f_h(X)}{\mathcal{G}} }
    = 0,
    $
    which then implies
    $ \Esmall{h(X) \ind_B} = \Esmall{h(X)} \Esmall{\ind_B}$. 
    Let now $A$ be an arbitrary Borel subset of $\R$, and let $h = \ind_A$.
    We then obtain $\E{\ind_A(X) \ind_B} = \E{\ind_A(X)} \E{\ind_B}$, that is, equivalently, $\mathbb{P}(X^{-1} (A) \cap B) = \mathbb{P}(X^{-1}(A)) \, \mathbb{P}(B)$.
    As $A$ and $B$ were arbitrary, the result follows.
\end{proof}

\begin{corollary}\label{co:3}
    Let $X$ be a random variable on $(\Omega, \mathcal{F},\mathbb{P})$.
    Let $(\mathcal{G}_n)_{n \in \mathbb{N}}$ be a filtration on the same probability space, let $\mathcal{G}= \bigvee_{n \in \mathbb{N}} \mathcal{G}_n$ be the sigma-algebra generated by $(\mathcal{G}_n)_{n \in \mathbb{N}}$.
    We have
    \[
    X \sim \mu \ \textrm{and} \ X, \mathcal{G}_n \ \textrm{independent for every $n$}
    \ouriff 
    X \sim \mu \ \textrm{and} \ X, \mathcal{G} \ \textrm{independent}.
    \]
\end{corollary}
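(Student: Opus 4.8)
My plan is to dispose of the easy implication directly and route the substantive one through Theorem~\ref{th:1}. For ``$\Leftarrow$'' there is nothing to prove: if $X$ is independent of $\mathcal{G}$, then since $\mathcal{G}_n \subseteq \mathcal{G}$ for every $n$, $X$ is independent of each $\mathcal{G}_n$, and $X \sim \mu$ is common to both sides. For ``$\Rightarrow$'', I again note that $X \sim \mu$ is assumed on both sides, so the only thing to establish is that independence of $X$ from each $\mathcal{G}_n$ forces independence of $X$ from $\mathcal{G} = \bigvee_{n} \mathcal{G}_n$.

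To do this I would fix $f \in \mathcal{C}$ and apply Theorem~\ref{th:1} with $\mathcal{G}_n$ playing the role of $\mathcal{G}$: the hypothesis gives $\condexp{\mathcal{N} \! f(X)}{\mathcal{G}_n} = 0$ a.s. for every $n$. Since $(\mathcal{G}_n)_n$ is a filtration with $\bigvee_n \mathcal{G}_n = \mathcal{G}$, L\'evy's upward martingale convergence theorem yields $\condexp{\mathcal{N} \! f(X)}{\mathcal{G}_n} \to \condexp{\mathcal{N} \! f(X)}{\mathcal{G}}$ almost surely (and in $L^1$) as $n \to \infty$. As each term on the left is zero, so is the limit, whence $\condexp{\mathcal{N} \! f(X)}{\mathcal{G}} = 0$ a.s. Since $f \in \mathcal{C}$ is arbitrary, a second application of Theorem~\ref{th:1}, now in the reverse direction, gives $X \sim \mu$ together with independence of $X$ and $\mathcal{G}$, which is the claim.

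The one point I would be careful about is integrability: for the conditional expectations and the convergence theorem to make sense I need $\mathcal{N} \! f(X) \in L^1(\mathbb{P})$, which I would justify from the fact that $X \sim \mu$ is part of the hypothesis — precisely the situation in which the quantities $\E{\mathcal{N} \! f(X)}$ of Assumption~\ref{as:1} and the conditional expectations of Theorem~\ref{th:1} are well defined — so I do not anticipate a genuine obstacle here. (As a sanity check, the statement also admits a purely measure-theoretic proof: for fixed Borel $B$ the family of events $A$ with $\mathbb{P}(\{X \in B\} \cap A) = \mathbb{P}(X \in B)\,\mathbb{P}(A)$ is a $\lambda$-system containing the algebra $\bigcup_n \mathcal{G}_n$, which generates $\mathcal{G}$, so Dynkin's $\pi$--$\lambda$ theorem applies; but I prefer the argument through Theorem~\ref{th:1} since it stays inside the framework of this paper and makes transparent that the Stein characterization is stable under passing to the sigma-algebra generated by a filtration.)
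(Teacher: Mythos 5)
Your proof is correct and follows essentially the same route as the paper: the easy direction via $\mathcal{G}_n \subseteq \mathcal{G}$, and the converse by applying Theorem~\ref{th:1} to each $\mathcal{G}_n$, invoking L\'evy's upward martingale convergence theorem to pass to $\mathcal{G}$, and then applying Theorem~\ref{th:1} in the reverse direction. Your additional remarks on integrability and the alternative $\pi$--$\lambda$ argument are sound but not needed to match the paper's proof.
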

\begin{proof}
Sufficiency is clear as $\mathcal{G}_n$ is a sub-sigma-algebra of $\mathcal{G}$ for every $n \in \N$; we prove necessity. Assume then 
$X \sim \mu$ and $X, \mathcal{G}_n$ independent for every $n \in \N$. We then have 
$ \condexpsmall{\mathcal{N} \! f(X)}{\mathcal{G}_n} = 0 \ a.s., \ \forall f \in \mathcal{C}, \ \forall n \in \N,$
and as
$ \condexpsmall{\mathcal{N} \! f(X)}{\mathcal{G}_n} \to \condexpsmall{\mathcal{N} \! f(X)}{\mathcal{G}} \ a.s. \ \textrm{and \ in} \ L^1$
by martingale convergence, the result follows.
\end{proof}

In what follows, $\tilde{\mathcal{C}}$ will denote the class of functions $f \colon \R \times \R \to \R$, $(x,y) \mapsto f(x,y)$, such that $f(\cdot,y) \in \mathcal{C}$ for all $y \in \R$.
In the expression $\mathcal{N} \! f(x,y)$, $\mathcal{N}$ is assumed to act only on the first argument.

\begin{corollary}\label{co:4}
    Let $X$ be a random variable on $(\Omega,\mathcal{F},\mathbb{P})$.
    We have
    \[
        X \sim \mu \ \textrm{and} \ X, \mathcal{G} \ \textrm{independent} 
        \ouriff 
        \condexpsmall{\mathcal{N} \! f(X,y)}{\mathcal{G}} = 0 \ a.s., \ \forall f \in \tilde{\mathcal{C}}, \forall y \in \R.
    \]
\end{corollary}
\begin{proof}
The sufficient direction follows directly from Proposition~\ref{pr:1}, by taking functions in $\tilde{\mathcal{C}}$ which depend only on the first argument.
To prove the converse direction, let $y \in \R$, and let $f \in \tilde{\mathcal{C}}$.
Let $f_y \colon \R \to \R$ be defined by $f_y(x) = f(x,y)$ for all $x \in \R$.
By definition of $\tilde{\mathcal{C}}$, $f_y \in \mathcal{C}$.
It follows therefore from Proposition~\ref{pr:1} that 
$    X \sim \mu \ \textrm{and} \ X, \mathcal{G} \ \textrm{independent} \ourimplies 
    \condexpsmall{\mathcal{N} \! f_y(X)}{\mathcal{G}} = 0 \ a.s.
$
which is the desired result.
\end{proof}

We define the class $\tilde{\mathcal{H}}$ to be the class of all functions $h \colon \R \times \R \to \R$, such that for all $y \in \R$, the function $h_y \colon \R \to \R$ defined by $h_y(x) = h(x,y)$ for all $x \in \R$, belongs to $\mathcal{H}$.
Let $y \in \R$, and consider the equation
\begin{equation}\label{eq:MSE}
    \mathcal{N} \! f_y(x) = h_y(x) - \E{h(M,Y)}, \ x \in \R,
\end{equation}
where $h \in \tilde{\mathcal{H}}$.
We assume the class $\tilde{\mathcal{H}}$ to be \emph{separating} on $\R^2$.
Note that for all $y \in \R$, the Stein equation \eqref{eq:MSE} has a unique solution $f_{y,h} \in \mathcal{C}$.
Defining 
$
    f_h \colon \R \times \R \to \R, \ (x,y) \mapsto f_h(x,y) = f_{y,h}(x), 
$
we can write 
\begin{equation}\label{eq:PSE}
    \mathcal{N} \! f_{h}(x,y) = h(x,y) - \E{h(M,Y)}, \ x \in \R, y \in \R.
\end{equation}
We define an appropriate class $\tilde{\mathcal{E}}$ of functions that contain all the solutions of \eqref{eq:PSE} for $h \in \tilde{\mathcal{H}}$ (and possibly more). We can now state our main result:

\begin{theorem}\label{th:1}
Let $X,Y,M$ be given random variables, with $M \sim \mu$, let $\mathcal{N}$ be the Stein operator associated with $\mu$, let $\tilde{\mathcal{H}}$ be a suitable class of test functions, and let $\tilde{\mathcal{E}}$ the class of functions containing the solutions of \eqref{eq:PSE} for $h \in \tilde{\mathcal{H}}$. We assume
$\|\mathcal{N} \! f\|_{L^\infty(\R^2)} < \infty$, for all
$f \in \tilde{\mathcal{E}}$. 
Then:
\begin{equation*} %\label{eq:dtvi}
X \sim \mu \ \textrm{and} \ X,Y \ \textrm{independent} \ \ouriff
\ \Esmall{\mathcal{N} \! f(X,Y)} = 0, \forall f \in \tilde{\mathcal{E}}.
\end{equation*}
\end{theorem}

\begin{remark}
    The characterization obtained here is identical to those obtained in~\cite{P2022, T2025} for the normal distribution, and in~\cite{T2024} for the Gamma distribution. We point out, however, that this characterization is valid for any law for which a Stein operator does exist, and is not restricted to the normal or Gamma distributions.
\end{remark}

\begin{remark}
The condition that $\|\mathcal{N} \! f\|_{L^\infty(\R^2)} < \infty$ in the theorem is analogous to the condition $\|\partial_{x_1} f\|_{L^\infty(\R^2)}$ in~\cite{P2022}. Analogous restrictions on the function $f$ can be found in~\cite{T2024,T2025}.
\end{remark}

\begin{proof}
Assume first $\Esmall{\mathcal{N} \! f(X,Y)} = 0$ for all
$f \in \tilde{\mathcal{E}}$. Let $f_1 \in \mathcal{C}$, and let $k\colon \R \to \R$ Borel-measurable and bounded.
    We have 
    $
    \Esmall{ k(Y) \condexpsmall{\mathcal{N} \! f_1(X)}{\sigma(Y)} } = \Esmall{\mathcal{N} \! f_1(X) k(Y)} 
    = \Esmall{\mathcal{N} \! (f_1 \otimes k) (X,Y)} 
    = 0,
    $
    since $f_1 \otimes k \in \tilde{\mathcal{E}}$. Since $k$ was arbitrary, this implies
$\condexpsmall{\mathcal{N} \! f_1(X)}{\sigma(Y)} = 0 \ a.s., \ \forall f_1 \in \mathcal{E}$,
    and by Proposition~\ref{pr:1}, it follows that $X \sim \mu$ and $X,Y$ are independent.
We now prove the converse; we first assume $Y$ has finite range, with
   $Y(\Omega) = \lbrace y_1, y_2, \ldots y_N \rbrace$.
    We write $Y = \sum_{i} y_i \ind_{\lbrace Y = y_i \rbrace}$; with no loss of generality, we assume $\mathbb{P}(Y = y_i) > 0$ for all $i \in \mathbb{N}$.
It follows from Corollary \ref{co:4} that 
    $ %\label{eq:prop1}
        \condexpsmall{\mathcal{N} \! f(X,y)}{\sigma(Y)} = 0 \ \textrm{a.s.}, \ \forall y \in \R, \forall f \in \tilde{\mathcal{E}}.
    $
    Note now that $\condexp{\mathcal{N} \! f(X,y)}{\sigma(Y)} = \sum_i c_i(y) \ind_{\lbrace Y = y_i \rbrace}
    $, where the $c_i$ are functions depending on $y$ (as well as $f$).
    It follows that 
        $c_i(y) = 0, \ \forall y \in \R, \ \forall i \in \mathbb{N}$.
    On the other hand, we also have
    $
        \condexpsmall{\mathcal{N} \! f(X,Y)}{\sigma(Y)} = \sum_i d_i \ind_{\lbrace Y = y_i \rbrace},
    $
    where $d_i \in \R$, for all $i \in \mathbb{N}$.
    As $\condexpsmall{\mathcal{N} \! f(X,Y)}{Y \! = \! y_k} = 
        \condexpsmall{\mathcal{N} \! f(X,y_k)}{Y \! = \! y_k} = 
        c_k(y_k)$,
    it follows that $d_k = c_k(y_k) = 0$, and hence $d_k = 0$ for all $k \in \mathbb{N}$.
    Hence $\condexpsmall{\mathcal{N} \! f(X,Y)}{\sigma(Y)} = 0 \ a.s. \ \forall f \in \tilde{\mathcal{E}}$,
    and hence $\Esmall{\mathcal{N} \! f(X,Y)} = 0 \ \forall f \in \tilde{\mathcal{E}}$.
    Let now $(Y_n)_n$ be a sequence of $\sigma(Y)-$measurable simple functions converging almost surely to $Y$; since, by
    assumption, $\|\mathcal{N} \! f\|_{L^\infty(\R^2)} < \infty$, for all
$f \in \tilde{\mathcal{E}}$, it follows that     
    the sequence $(\mathcal{N} \! f(X,Y_n))_n$ is uniformly integrable $ \forall f \in \tilde{\mathcal{E}}$. Now, since for all
    $f \in \tilde{\mathcal{E}}$ we have
    $\Esmall{\mathcal{N} \! f(X,Y_n)} = 0, \ \forall n \in \N$,
    and since 
    $\mathcal{N} \! f(X,Y_n) \to \mathcal{N} \! f(X,Y)$
    almost surely, and hence in $L^1$, as $n \to \infty$, the result follows.
\end{proof}

%%%%%%%%%%%%%%%%%%%%%%%%%%%%%%%%%%%%%%%%%%%%%%%%%%%%%%%%%%%%%%%%%%
%% 
%% SECTION 3
%%
%%%%%%%%%%%%%%%%%%%%%%%%%%%%%%%%%%%%%%%%%%%%%%%%%%%%%%%%%%%%%%%%%%

\subsection{Bounds on the total variation and Wasserstein distances between laws using Stein equations}\label{subs:32}

Choosing an appropriate class $\tilde{\mathcal{E}}$ of functions that contain all the solutions of \eqref{eq:PSE} for $h \in \tilde{\mathcal{H}}$ (and possibly more), we obtain
\begin{equation}\label{eq:PSE1}
    \sup_{h \in \tilde{\mathcal{H}}} \left| \Esmall{h(X,Y)} - \Esmall{h(M,Y)} \right| \leq 
    \sup_{f \in \tilde{\mathcal{E}}} \left| \Esmall{\mathcal{N} \! f(X,Y)} \right|.
\end{equation}

Let then $\tilde{\mathcal{H}}_{TV}$ denote the class of all
indicator functions of Borel subsets of $\R^2$, and let
$\tilde{\mathcal{E}}_{TV}$ be a class of functions that contain all the solutions of \eqref{eq:PSE} for $h \in \tilde{\mathcal{H}}_{TV}$; similarly, let $\tilde{\mathcal{H}}_{W}$ denote the class of all Lipschitz $1$ functions on $\R^2$, and let
$\tilde{\mathcal{E}}_{W}$ be a class of functions that contain all the solutions of \eqref{eq:PSE} for $h \in \tilde{\mathcal{H}}_{W}$; We then obtain immediately:
\begin{theorem}\label{th:TVandWBound}
Let $X,Y,M$ be given random variables, with $M \sim \mu$,  and let $\mathcal{N}$ be the Stein operator associated with $\mu$. 
With $d_{\textrm{TV}}$ denoting the total variation distance, and
$d_{\textrm{W}}$ the Wasserstein distance between laws, we have:
\begin{equation*} %\label{eq:dtvi}
    d_{\textrm{TV}} (\mathbb{P}_{X,Y}, \mathbb{P}_M \otimes \mathbb{P}_Y) \leq
    \sup_{f \in \tilde{\mathcal{E}}_{TV}} \E{\mathcal{N} \! f(X,Y)}
\end{equation*} 
and
\begin{equation*}
 %\label{eq:dtvi}
    d_{\textrm{W}} (\mathbb{P}_{X,Y}, \mathbb{P}_M \otimes \mathbb{P}_Y) \leq
    \sup_{f \in \tilde{\mathcal{E}}_{W}} \E{\mathcal{N} \! f(X,Y)}.
\end{equation*}
\end{theorem}

\section{Application: Invariant measure of an ergodic diffusion}
\label{sec:Applications}

In this section, we illustrate our proposed extension on the example of the invariant law of an ergodic diffusion, as the Stein operators are readily obtained in this case (\cite{BSS2005}). This is not a mere theoretical curiosity either, and knowing when a given random variable has the law of a given diffusion while being independent with respect to another random variable, is a problem of practical importance (\cite{T2024}).
\par
Consider then the diffusion given by
\[\mathrm{d} X_t = - \left( X_t-\frac{\alpha}{\alpha + \beta} \right) \textrm{d}t + \sqrt{\frac{2}{\alpha+\beta}X_t(1-X_t)} \, \textrm{d} B_t, \ t \geq 0,\]
where $\alpha,\beta > 0,$ and where $(B_t)_{t \geq 0}$ is a standard Brownian motion on $\R$. It is well known that this diffusion is ergodic, with invariant measure the $\mathrm{Beta}(\alpha,\beta)$ distribution (\cite{BSS2005}), which density is given by
\[
    p_{\alpha,\beta}(x) = \frac{\Gamma(\alpha+\beta)}{\Gamma(\alpha)\Gamma(\beta)}x^{\alpha-1}(1-x)^{\beta-1}\ind_{]0,1[}(x), \quad x \in \R.
\]
The corresponding Stein operator is given by (\cite{KT2012}):
\[
    \mathcal{N} \! f(x) = \frac{1}{\alpha + \beta} \,x (1-x) f'(x) - \left( x - \frac{\alpha}{\alpha + \beta} \right) f(x), \ x \in [0,1],
\]
and for all $y \in \R$, the unique solution to the Stein equation 
\[\mathcal{N} \! f(x,y) = h(x,y) - \E{h(M,y)}, \ x \in [0,1],\] 
with the operator $\mathcal{N}$ acting only on the first variable, is given by
\begin{equation}\label{eq:BetaSteinSol}
    f_h(x,y) = (\alpha + \beta) x^{-\alpha}(1-x)^{-\beta} \int_0^x 
    \frac{h(u,y) - \E{h(M,y)}}{u^{1-\alpha} (1-u)^{1-\beta}} \, \mathrm{d}u, \ x \in [0,1].
\end{equation}
For $h$ $1-$Lipschitz, there exist constants $C_1,C_2,C_3 > 0$ depending only on $\alpha,\beta$ such that $\norm{\frac{\partial f_h}{\partial x}}_\infty \leq C_1$, $\norm{\frac{\partial f_h}{\partial y}}_\infty \leq C_2$, and  $\norm{f_h}_{\infty} \leq C_3$ (\cite{D2012}).
\par
We now specialize to the case $\alpha=\frac{1}{2},\beta=1,$ in which case the diffusion becomes
\[ \mathrm{d} X_t = -\left( X_t-\tfrac{1}{3} \right) \mathrm{d} t + \sqrt{\tfrac{4}{3}X_t(1-X_t)} \, \mathrm{d} B_t, \ t \geq 0,\]
and the corresponding Stein operator is given by
\begin{equation}\label{eq:soe}
    \mathcal{N} \! f(x) = \tfrac{2}{3} \,x (1-x) f'(x) - \left( x - \tfrac{1}{3} \right) f(x), \ x \in [0,1].
\end{equation}
We assume given an isonormal Gaussian process $W=\{W(\mathfrak{h}): \mathfrak{h} \in \mathfrak{H}\}$ defined on a suitable probability space $(\Omega,\mathcal{F},\mathbb{P}),$ where $\mathfrak{H}$ is a real separable Hilbert space with inner product $\langle \cdot,\cdot \rangle_{\mathfrak{H}}$, as well as random variables
$X,Y \in \mathbb{D}^{1,2}$, with $\mathbb{D}^{1,2}$ denoting the domain in $L^2(\Omega)$ of the Malliavin derivative operator $D$; We denote by $\delta$ the adjoint of $D$, and by $L=-\delta D$ the infinitesimal generator of the corresponding Ornstein-Uhlenbeck semigroup (\cite{N2006}). With the random variable $M$
$\mathrm{Beta}(\frac{1}{2},1)-$distributed, we wish to compute a bound on the Wasserstein distance $d_{\textrm{W}} (\mathbb{P}_{X,Y}, \mathbb{P}_M \otimes \mathbb{P}_Y)$. In order to apply Theorem~\ref{th:TVandWBound}, we need to estimate 
$\vert \Esmall{\mathcal{N} \! f_h(X,Y)} \vert$ 
for $h$ $1-$Lipschitz, where $f_h$ is the solution to the corresponding Stein equation and is given by \eqref{eq:BetaSteinSol} with
$\alpha=\frac{1}{2}$ and $\beta=1$.
\par
With the Stein operator $\mathcal{N}$ of \eqref{eq:soe} acting only on the first variable, we have, upon taking expectations and after a slight rearrangement:
\begin{align*}
   \Esmall{\mathcal{N} \! f_h(X,Y)} = 
   \E{\tfrac{2}{3} X(1-X)\partial_xf_h(X,Y) -
   (X - \E{X})f_h(X,Y)} \\
   - \E{ \left( \E{X}-\tfrac{1}{3} \right) f_h(X,Y)}.
\end{align*}
Applying the Malliavin integration by parts formula, we obtain
\begin{multline*}
 \Esmall{(X - \Esmall{X}) f_h(X,Y)} = \Esmall{(\delta D) (-L)^{-1}(X - \Esmall{X}) f_h(X,Y)} \\
    = \Esmall{\partial_x f_h(X,Y) \langle DX,-DL^{-1}(X - \Esmall{X}) \rangle_{\mathfrak{H}} } 
    \\ + \Esmall{\partial_y f_h(X,Y) \langle DY,-DL^{-1}(X - \Esmall{X}) \rangle_{\mathfrak{H}}},
\end{multline*}
which yields
\begin{multline*}
    \Esmall{\mathcal{N} \! f_h(X,Y)} = 
        \E{\partial_x f_h(X,Y) \left( \tfrac{2}{3} X(1-X) - \langle DX,-DL^{-1}(X -\Esmall{X})\rangle_{\mathfrak{H}} \right)} \\
    + \Esmall{\partial_y f_h(X,Y) \langle DY,DL^{-1}(X - \Esmall{X}) \rangle_{\mathfrak{H}}}
    - \E{ \left( \Esmall{X}-\tfrac{1}{3} \right) f_h(X,Y)},
\end{multline*}
and hence,
\begin{multline}\label{eq:nibp}
     d_{\textrm{W}} (\mathbb{P}_{X,Y}, \mathbb{P}_M \otimes \mathbb{P}_Y) \leq 
        C_1 \E{ \left| \tfrac{2}{3} X(1-X) - \langle DX,-DL^{-1}(X - \Esmall{X})\rangle_{\mathfrak{H}} \right| } \\
    + C_2 \E{ \left| \langle DL^{-1}(X - \Esmall{X}), DY \rangle_{\mathfrak{H}} \right| }
     %\\
    + C_3 \E{ \left| \Esmall{X}-\tfrac{1}{3} \right| }.
\end{multline}
Let now the sequences $(X_n)_{n \in \mathbb{N}}$ and 
$(Y_n)_{n \in \mathbb{N}}$ in $\mathbb{D}^{1,2}$ be given by
\[
X_n = e^{-I_1(\mathfrak{h}_1)^2 - \frac{n}{n+1} I_2(\mathfrak{h}_2)^2}, \ 
Y_n = \frac{1}{n+1} I_1(\mathfrak{h}_1) + I_1(\mathfrak{h}_3), \quad n \in \mathbb{N},
\]
where $(\mathfrak{h}_i)_{i \in \mathbb{N}}$ is a Hilbert basis for $\mathfrak{H}$. As a simple application of the bound on Wasserstein distance obtained in Equation \eqref{eq:nibp}, we prove that $d_{\textrm{W}} (\mathbb{P}_{X_n,Y_n}, \mathbb{P}_M \otimes \mathbb{P}_{Y_n}) \to 0$
as $n \to \infty$, that is, $X_n$ is asymptotically $\mathrm{Beta}(\frac{1}{2},1)$ distributed \emph{and} $X_n,Y_n$ are asymptotically independent; a finer analysis would also yield the rate of convergence to $0$.
\par
Note first that $X = e^{-I_1(\mathfrak{h}_1)^2 - I_2(\mathfrak{h}_2)^2}$ is
$\mathrm{Beta}(\frac{1}{2},1)-$distributed, and by dominated convergence, 
$ \Esmall{ | \Esmall{X_n}-\tfrac{1}{3} | } \to 0 $
as $n \to \infty$.
\par
We now deal with the first term of \eqref{eq:nibp}, and 
to do so, we use the following result from \cite{NV2009}: Letting $Z = h(N)$, where $h: \mathbb{R}^n \to \mathbb{R}$ is of class $C^1$ 
with bounded derivatives, $N = (N_1, \ldots, N_n)$ is a Gaussian vector with zero mean and covariance matrix $K = (K_{i,j})_{i,j=1,\ldots,n}$, and $N'$ is an independent copy of $N$ (with $N,N'$ defined on a product probability space $(\Omega \times \Omega', \mathcal{F} \otimes \mathcal{F}, \mathbb{P} \times \mathbb{P}')$), and with $\mathbb{E}'$ denoting the expectation with respect to the probability measure $\mathbb{P}^\prime$,
the following formula holds: % (\cite{NV2009}):
\begin{multline*}
    \langle D(-L)^{-1}(Z - \Esmall{Z}),DZ \rangle_{\mathfrak{H}} 
    \\ = 
        \int_0^\infty e^{-u} \, \mathbb{E}'\left[  \sum_{i,j=1}^n K_{i,j} \partial_{x_i}h(N) \partial_{x_j} h\left( e^{-u}N + \sqrt{1-e^{-2u}} N' \right) \right] \mathrm{d}u.
\end{multline*}
In our case, $n = 2$, $N_1 = I_1(\mathfrak{h}_1), N_2 = I_2(\mathfrak{h}_2)$, $K_{i,j} = \delta_{ij}$.
After the change of variables $e^{-u} = a$, repeated applications of Lemma 1 from \cite{KT2012} yield, after somewhat involved but straightforward calculations:
\begin{multline*}
\langle D(-L)^{-1}(X_n - \Esmall{X_n}),DX_n \rangle_{\mathfrak{H}} = 4e^{-(N_1^2 + \frac{n}{n+1}N_2^2)} \times \\
    \int_0^1 \frac{a \left( \left( 1 + 2 \frac{n}{n+1}(1-a^2) \right)  N_1^2 + 
        \frac{n^2}{(n+1)^2}(3-2a^2)N_2^2 \right)}
        {(3 - 2a^2)^{3/2}(1 + 2\frac{n}{n+1}(1-a^2))^{3/2}}
    e^{-a^2 \left(\frac{N_1^2}{3 - 2a^2} + \frac{N_2^2}{\frac{1}{n} + 3 - 2a^2} \right)} \mathrm{d} a.
\end{multline*}
Since $X$ is $\mathrm{Beta}(\frac{1}{2},1)-$distributed, and hence satisfies the Malliavin-Stein equation
\[ \langle DX,-DL^{-1}(X-\Esmall{X})\rangle_{\mathfrak{H}} = \tfrac{2}{3} X(1-X) \ a.s., \]
and since $X_n \to X$ pointwise on $\Omega$, it follows immediately from dominated convergence that 
\[ \E{ \left| \tfrac{2}{3} X_n(1-X_n) - \langle DX_n,-DL^{-1}(X_n-\Esmall{X_n})\rangle_{\mathfrak{H}} \right| } \to 0\ \, \textrm{as $n \to \infty$.} \]
\par
We now show that $ \Esmall{|\langle DL^{-1}(X_n-\E{X_n}), DY_n \rangle_{\mathfrak{H}} |} \to 0$
as $n \to \infty$. It follows from Proposition 1.4.4 of \cite{N2006} that
$ DL^{-1}(X_n - \Esmall{X_n}) \in L^2(\Omega) \otimes \mathfrak{H}_{1,2}, \ \forall n \in \mathbb{N}$,
where $\mathfrak{H}_{1,2}$ is the $\R-$linear span of $\{\mathfrak{h}_i\}_{i=1}^2$ in the Hilbert space $\mathfrak{H}$. Hence,
\[ \langle DL^{-1}(X_n-\Esmall{X_n}), DY_n \rangle_{\mathfrak{H}} = 
\frac{1}{n+1} \langle DL^{-1}(X_n-\Esmall{X_n}), \mathfrak{h}_1 \rangle_{\mathfrak{H}}, \]
and, using the commutation relation $ D L^{-1} F = -(I - L)^{-1} DF$
for $F \in \mathbb{D}^{1,2}$ with $\Esmall{F}=0$ (\cite{NP2012}), 
we obtain
\[ \langle DL^{-1}(X_n-\Esmall{X_n}), DY_n \rangle_{\mathfrak{H}} 
= -\frac{1}{n+1} \langle (I-L)^{-1}DX_n, \mathfrak{h}_1 \rangle_{\mathfrak{H}}. \]
Since $(I-L)^{-1}$ is a contraction on $L^2(\Omega)$ (\cite{N2006}), and since $(X_n)_{n \in \mathbb{N}}$ is bounded in $\mathbb{D}^{1,2}$, an application of Cauchy-Schwarz yields $C > 0$ such that
\[ \Esmall{|\langle (I-L)^{-1}DX_n, \mathfrak{h}_1 \rangle_{\mathfrak{H}}|} \leq C, \ \ \forall n \in \mathbb{N}, \]
from which the result follows.

\section*{Acknowledgments}
The authors sincerely thank the anonymous reviewer for their very careful reading of the manuscript and for their constructive
comments and suggestions, all of which have greatly helped improve the clarity and quality of this work. The authors also wish to thank Dr. Zachary Selk for valuable comments.
This work was supported in part by the Natural Sciences and Engineering Research Council (NSERC) of Canada.

\section*{Data availability}
No data was used for the research described in the article.

% ---------------------------------------------------
% REFERENCES
% ---------------------------------------------------
\bibliographystyle{plain} % or amsplain, alpha, etc.
\bibliography{references} % upload references.bib with your submission

\end{document}